\date{\today}
\newcommand{\F}{{\mathbb F}}
\renewcommand{\SS}{\textsf{SS}\xspace}
\newcommand{\RSS}{\textsf{RSS}\xspace}
\newcommand{\Soc}{\mathrm{Soc}}
\newcommand{\Out}{\mathrm{Out}}
\newcommand{\Sym}{\mathrm{Sym}}
\newcommand{\PSL}{\mathrm{PSL}}
\newcommand{\SL}{\mathrm{SL}}
\newcommand{\PSU}{\mathrm{PSU}}
\newcommand{\PSp}{\mathrm{PSp}}
\newcommand{\POm}{\mathrm{P}\Omega}
\newcommand{\Magma}{{\textsc {Magma}}\xspace}
\newtheorem{theorem}{Theorem} 
\newtheorem{lemma}[theorem]{Lemma} 
\newtheorem{notation}[theorem]{Notation}
\title{Minimal sized generating sets of permutation groups}
\author{Derek F. Holt and Gareth Tracey}
\begin{document}
\maketitle
\begin{abstract}
We present a randomised variant of an algorithm of Lucchini and
Thakkar~\cite{LucTha} for finding a smallest sized generating set in a
finite group, which has polynomial time expected running time in finite
permutation groups.
\end{abstract}
\section{Introduction}\label{sec:intro}
Lucchini and Thakkar prove in \cite{LucTha} that a generating set of smallest
possible size in a finite group $G$ can be found in time polynomial in the
group order $|G|$ and, in the case when $G$ is given as a subgroup of the
symmetric group $\Sym(n)$, this can be done using $n^2\lambda(G)^{13/5}$
\emph{generating tests}, where $\lambda(G)$ is the maximum of the
orders of the non-abelian composition factors of $G$, with $\lambda(G)=1$ for
solvable $G$. In this article we prove the following result.

\begin{theorem}\label{thm:main}
There is a randomised variant of the algorithm described in \cite{LucTha} for
subgroups of $\Sym(n)$ with expected running time polynomial in $n$.
The expected number of generating tests is $O(n^2 \log n)$, and
$O(n^2/\sqrt{\log n})$ if $G$ is transitive.
\end{theorem}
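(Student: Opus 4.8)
The plan is to isolate, inside the Lucchini--Thakkar algorithm, the one step whose cost carries the factor $\lambda(G)^{13/5}$, and to replace that deterministic step by random sampling. Their algorithm works down a chief series $1 = G_0 < G_1 < \cdots < G_m = G$, keeping for each $i$ a generating set of minimum size for $G/G_i$ and, on passing from $G_i$ to $G_{i-1}$, deciding whether the chief factor $G_i/G_{i-1}$ forces a new generator and, if not, adjusting the current generators so that they still generate modulo $G_{i-1}$. For an abelian chief factor this is linear algebra over a finite field and costs $\mathrm{poly}(n)$; the super-polynomial blow-up occurs only at a non-abelian chief factor $V\cong S^k$, where the deterministic procedure runs through a candidate set whose size is polynomial in $|S|$ --- roughly, this is where $\lambda(G)^{13/5}$ enters, via the inequalities relating $|S|$ to the indices of its maximal subgroups used in \cite{LucTha}. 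I would replace this exhaustive search by repeatedly drawing a uniformly random element --- of $S$, of the relevant over-group, or a tuple governing the action on the $k$ factors of $V$ --- and testing it with a single generating test.

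The crucial input is that, uniformly over all non-abelian simple $S$, the elements being searched for form a \emph{positive proportion} of all elements. In each case the test is of the shape ``the chosen element, together with the data already fixed, generates the required section'', equivalently that the element avoids a fixed proper subgroup or completes a given element to a generating pair; the random-generation theorems for finite simple groups (Dixon; Kantor--Lubotzky; Liebeck--Shalev) together with the spread results of Guralnick--Kantor and Breuer--Guralnick--Kantor then give an absolute constant $c>0$ with which the test succeeds. Hence the expected number of random draws, and so of generating tests, needed to emulate one deterministic search is $O(1)$. Since every guess is confirmed by an exact generating test, the resulting algorithm is Las Vegas: it never returns a wrong answer, the geometric tail of the number of draws at each step costs only a constant factor in expectation, and --- as each generating test and all intervening bookkeeping are polynomial-time and only expected-polynomially-many are performed --- the expected running time is polynomial in $n$.

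It remains to count the expected number of generating tests. The abelian chief factors cost $O(d(G))$ each, hence $O(d(G)\,c(G))$ in total, where $c(G)$, the number of chief factors of a degree-$n$ permutation group, is $O(n)$. A non-abelian chief factor $V\cong S^k$ now costs, in expectation, $O(1)$ per random draw, and summed over all such factors the work is $O(d(G))$ times the total number of non-abelian composition factors, which is $O(n)$; a cruder bound replaces this count by $\log_2|G|=O(n\log n)$. Using $d(G)=O(n)$ this yields the general estimate $O(n^2\log n)$. For transitive $G$ one uses instead the sharper bound $d(G)=O(n/\sqrt{\log n})$, known to hold for every transitive subgroup of $\Sym(n)$; combined with $c(G)=O(n)$ and the $O(n)$ bound on the number of non-abelian composition factors this gives $O(n^2/\sqrt{\log n})$.

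The main obstacle I anticipate is precisely this probabilistic estimate, in the exact form demanded by the Lucchini--Thakkar step: one must show that the specific ``witnessing'' event used there has probability bounded below by an absolute constant for \emph{every} non-abelian simple group --- including the sporadic groups and the small groups of Lie type, where asymptotics say nothing --- and for the genuine, possibly intransitive, action of $G$ on the $k$ simple direct factors of $V$, not merely for the clean statement that two random elements generate $S$. Establishing this uniformly, and then threading the resulting constants through the accounting so that the exponents come out exactly as stated, is where the real work lies; by contrast, replacing enumeration with sampling and packaging everything as a Las Vegas algorithm is routine.
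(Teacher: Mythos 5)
Your overall architecture (descend a chief series, replace the expensive deterministic search at non-abelian chief factors by random sampling verified with generating tests) matches the paper, but there are two genuine gaps that your proposal leans on and does not resolve. First, the claimed absolute constant lower bound on the success probability is false, and you cannot get one: the event is not ``two random elements generate $S$'' but ``the random tuple $(g_1n_1,\ldots,g_dn_d)$ generates $G$ modulo $N_{k-1}$'', and when the chief factor $N$ has $\delta$ pairwise $G$-equivalent copies (a crown such as $A_5^{19}$), the proportion of successful tuples among those generating the last component is only guaranteed to be about $1/\delta$, which can be as small as $\Theta(1/n)$. The paper's Lemma~\ref{lem:Exp} proves exactly the bound $53/(90\delta)\ge 53/(18n)$ via the $\Gamma$-orbit count in the proof of \cite[Proposition 6]{LucTha} together with Lemma~\ref{mindegfac} ($\delta\le n/5$), so each non-abelian lifting step costs an expected $O(n)$ tests, not $O(1)$; your final exponents then need to be re-derived, and it is only by this $O(n)$-per-step accounting (times $O(n)$ chief factors, with an extra $\log$ from the budgeted trial counts) that $O(n^2\log n)$ actually comes out.

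Second, and more seriously, random sampling alone cannot decide \emph{which} lifting case you are in: if no tuple $n_1,\ldots,n_d$ works and a new generator must be added (Lifting Case 2), your Las Vegas sampler never terminates, since it can only certify existence, not non-existence. This is the real content of the paper's argument in the case $t'>d$: one runs a fixed budget of $90d\delta'\log|N|/53$ random trials, then falls back to an exhaustive test over all $|N|^d$ tuples, and the point is to show this fallback has controlled expected cost --- either Lifting Case 1 holds and the probability of reaching the fallback times $|N|^d$ is less than $1$, or Lifting Case 2 holds and then Lemma~\ref{lem:LTgen} together with $P(G)\ge b\delta P(S)$ and the bound $|\Out(S)|\le 2P(S)/3$ forces $|N|^{d-1}<60n/53$, so the exhaustive search is itself polynomially bounded. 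Without this mechanism (or some substitute for certifying that no lift exists), your algorithm is not correct, independently of the complexity claims. You should also note that in the abelian case the paper does \emph{not} sample: it deterministically tests the $dl$ candidates $g_ie_j$ and invokes \cite[Proposition 9]{LucTha} to conclude Lifting Case 2 on failure, which is what makes that case certifiable.
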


We have implemented this variant in \Magma~\cite{Magma} as a function
{\sc SmallestGeneratingSet}, and it runs fast in practice.

A generating test consists of testing whether a given subset $S$ of $G$ (which
in this paper will always have size bounded by the permutation degree $n$),
generates the whole of $G$. The \emph{Schreier-Sims} algorithm, which is easily
seen to run in polynomial time, can be used for this purpose. There is an
extensive discussion and detailed analysis of the complexity of variants of
this fundamental algorithm in \cite[Chapters 4 and 8]{Seress} and also in
\cite[Section 4.4.3]{HEO}.

For the proof of the theoretical result on expected running time, we assume
that a deterministic version of the Schreier-Sims algorithm, which we denote
by \SS, is used. There is however a randomised Monte-Carlo version of the
Schreier-Sims algorithm described in \cite[page 64]{Seress} (and in
\cite[Section 4.4.5]{HEO}) that often runs
much faster in practice, but has a small probability of returning an incorrect
negative answer; that is, it falsely reports that $\langle S \rangle \ne G$.
We shall call this version \RSS. In some (but not all) of the situations
in which we apply generating tests in the algorithm presented in this
paper, we repeatedly choose random elements in a search for certain
generators of $G$ that we know exist, and in these situations we can safely use
\RSS, because an incorrect negative answer would merely prolong the search.
Experiments with implementations
indicate that the increased speed that we gain from using \RSS rather than \SS
more than compensates for the time wasted by the occasional wrong answer. 
In our descriptions below we shall indicate in brackets when we can use \RSS
in an implementation, and when we must use \SS.

\section{Definitions and preliminary results}\label{sec:defs}
We shall assume that the reader is familiar with the results and proofs in
\cite{LucTha} and with the proof of Proposition 6 of that paper in particular.

The notion of \emph{$G$-equivalent} chief factors of $G$ is described in
\cite[Section 4]{LucTha} and, for a non-abelian chief factor $A$, the total
number of chief factors of $G$ that are $G$-equivalent to $A$ is denoted by
$\delta_G(A)$. There is a normal subgroup $R_G(A)$ of $G$ such that the socle
of $G/R_G(A)$ is the direct product of these $\delta_G(A)$ chief factors.

For a finite group $X$, let $P(X)$ be the smallest degree of a faithful
permutation representation of $X$; that is, the smallest $n$ for which $G$
embeds in $\Sym(n)$.
It is proved in \cite{KovPra} that, if $X$ is a finite group and $X/N$ has no
abelian normal subgroups, then $P(X/N) \le P(X)$. In particular, we have
$P(G) \ge P(G/R_G(A))$ for any non-abelian chief factor $A$ of $G$.
In \cite[Theorem 3.1]{EP} it is shown that, for a direct product
$N = S_1 \times \cdots \times S_r$ of finite non-abelian simple groups, we have
$P(N) = \sum_{i=1}^r P(S_i)$. Since $P(G)\geq P(N)$ whenever $N$ is a normal subgroup of a group $G$, we have the following result.

\begin{lemma}\label{mindegfac}
If $N \cong S^b$ is a non-abelian chief factor of the group $G$ with $S$ simple,
and $\delta_G(N)$ is as defined above, then $P(G) \ge b \delta_G(N) P(S)$.
\end{lemma}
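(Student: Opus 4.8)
The plan is to chain together the three imported facts recalled immediately before the statement: the Kovács--Praeger bound from \cite{KovPra}, the Easdown--Praeger formula \cite[Theorem 3.1]{EP}, and the monotonicity $P(G)\ge P(M)$ for a normal subgroup $M\trianglelefteq G$. First I would pass to the quotient $\bar G := G/R_G(N)$. By the defining property of $R_G(N)$ quoted above, $\Soc(\bar G)$ is the direct product of the $\delta_G(N)$ chief factors of $G$ that are $G$-equivalent to $N$; since $G$-equivalent chief factors are in particular isomorphic as groups, each of these is isomorphic to $N\cong S^b$, and hence $\Soc(\bar G)\cong S^{b\delta_G(N)}$. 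In particular $\bar G$ has no nontrivial abelian normal subgroup (any minimal normal subgroup of $\bar G$ lies in $\Soc(\bar G)$, which is a direct product of non-abelian simple groups), so the Kovács--Praeger inequality applies with $X=G$ and the quotient $\bar G$, giving $P(G)\ge P(\bar G)$.

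Next, since $\Soc(\bar G)\trianglelefteq\bar G$, monotonicity of $P$ under passing to normal subgroups gives $P(\bar G)\ge P(\Soc(\bar G))=P(S^{b\delta_G(N)})$. Finally, applying \cite[Theorem 3.1]{EP} to the direct product of $b\delta_G(N)$ copies of the non-abelian simple group $S$ yields $P(S^{b\delta_G(N)})=b\delta_G(N)\,P(S)$. Concatenating these relations gives $P(G)\ge P(\bar G)\ge P(S^{b\delta_G(N)})=b\delta_G(N)\,P(S)$, which is the claim.

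I do not expect a genuine obstacle here; the proof is essentially a three-step citation chain. The two points that need a word of justification are: (i) that $G$-equivalent non-abelian chief factors are isomorphic as abstract groups, so that counting them really identifies $\Soc(\bar G)$ with $S^{b\delta_G(N)}$ rather than merely with a product of $\delta_G(N)$ groups of equal order --- this is immediate from the definition of $G$-equivalence in \cite[Section 4]{LucTha}; and (ii) that $\bar G$ satisfies the hypothesis of the Kovács--Praeger theorem (no abelian normal subgroups), which follows from the structure of $\Soc(\bar G)$ just noted. Everything else is a direct application of the quoted results.
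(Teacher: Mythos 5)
Your proof is correct and follows exactly the route the paper takes: it chains the Kov\'acs--Praeger inequality $P(G)\ge P(G/R_G(N))$, the monotonicity of $P$ on subgroups, and the Easdown--Praeger additivity of $P$ on direct products of non-abelian simple groups, applied to $\Soc(G/R_G(N))\cong S^{b\delta_G(N)}$. The two extra justifications you supply (isomorphism type of the socle and the absence of abelian normal subgroups in the quotient) are both valid and are exactly what the paper leaves implicit.
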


We will also need a bound on the number of non-abelian chief factors in a transitive permutation group.
\begin{lemma}\label{lem:transchief}
Let $G\le S_n$. Then $G$ has at most $n$ non-abelian chief factors, and at most $\log{n}$ such factors if $G$ is transitive.
\end{lemma}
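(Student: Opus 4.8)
\emph{The bound $n$.} Let $R$ be the soluble radical of $G$. Then $G/R$ has trivial soluble radical, so in particular no abelian normal subgroup, whence $P(G/R)\le P(G)\le n$ by \cite{KovPra}. Its socle $N:=\Soc(G/R)$ has no abelian part, say $N\cong T_1\times\cdots\times T_t$ with each $T_i$ non-abelian simple; since $N\trianglelefteq G/R$, \cite[Theorem 3.1]{EP} gives $\sum_{i=1}^t P(T_i)=P(N)\le P(G/R)\le n$, and as each $P(T_i)\ge 5$ this yields $5t\le n$. Now the non-abelian chief factors of $G$ coincide with those of $G/R$ (refine a chief series of $G$ through $R$; all factors below $R$ are abelian), they all lie inside $N$ (because $C_{G/R}(N)=1$ forces $(G/R)/N$ to embed in $\Out(N)$, which is soluble by Schreier's conjecture), and a chief series through $N$ has at most $t$ factors inside $N$. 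Hence $G$ has at most $t\le n/5\le n$ non-abelian chief factors. (This also drops out of the argument behind Lemma~\ref{mindegfac} applied to the whole non-abelian socle.)

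\emph{The transitive bound: set-up and the imprimitive case.} It suffices to prove that a transitive $G\le S_n$ has at most $\log_5 n$ non-abelian chief factors; this implies the stated bound, and is sharp, since $A_5^{\,c}$ acting coordinatewise on $\{1,\dots,5\}^c$ is transitive of degree $5^c$ with exactly $c$ non-abelian chief factors. I argue by induction on $n$, splitting into the imprimitive and primitive cases. Suppose $G$ is imprimitive and fix a non-trivial block system with $r$ blocks of size $b$, so $n=rb$ with $1<r,b<n$; let $K\trianglelefteq G$ be the kernel of the action on the blocks, so $\overline G:=G/K\le S_r$ is transitive. Refining a chief series of $G$ through $K$ shows that the number of non-abelian chief factors of $G$ equals the number of non-abelian $G$-chief factors of $K$ (the factors contained in $K$) plus the number of non-abelian chief factors of $\overline G$, and the latter is at most $\log_5 r$ by induction.

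\emph{Controlling the kernel.} The key claim is that the number of non-abelian $G$-chief factors of $K$ is at most the number of non-abelian chief factors of the transitive group $G^{\Delta}$ of degree $b$ induced by the setwise stabiliser $G_{\{\Delta\}}$ on a block $\Delta$; since $G^{\Delta}$ has at most $\log_5 b$ such factors by induction, this gives at most $\log_5 b+\log_5 r=\log_5 n$ for $G$. To prove the claim, let $S$ be the soluble radical of $K$ (characteristic in $K$, hence normal in $G$); then the non-abelian $G$-chief factors of $K$ correspond to the $G$-orbits on the (simple) components of the semisimple group $K/S$. Each such component is detected on some block — being perfect, its lift to $K$ acts non-trivially, hence non-solubly, on some block, because $K$ is faithful on the disjoint union of the blocks — and, using that $K\le G_{\{\Delta\}}$ fixes $\Delta$ setwise and that $G$ permutes the blocks transitively, restriction to $\Delta$ provides a $G_{\{\Delta\}}$-equivariant map from a transversal of the $G$-orbits on the components of $K/S$ into the components of the semisimple quotient of $K^{\Delta}$; it is injective because distinct components commute. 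This bounds the number of non-abelian $G$-chief factors of $K$ by the number of non-abelian $G^{\Delta}$-chief factors of $K^{\Delta}$, and so by the number of non-abelian chief factors of $G^{\Delta}$.

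\emph{The primitive case, and the main obstacle.} Let $G$ be primitive and write $\Soc(G)\cong T^{k}$ with $T$ simple, using the O'Nan--Scott theorem. If $T$ is non-abelian, then (in types SD, CD, PA, TW) $\Soc(G)$ is a minimal normal subgroup whose $k$ factors are permuted transitively by $G$, so it contributes one non-abelian chief factor, and since $\Out(T)$ is soluble every further non-abelian chief factor of $G$ comes from the induced transitive action of $G$ of degree $k$ on those factors; hence there are at most $1+\log_5 k$ of them, and $5k\le P(T^{k})\le P(\Soc(G))\le n$ gives the bound $\log_5 n$ (the holomorph types HS, HC, with at most two non-abelian chief factors and $n\ge 60$, are handled similarly). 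If $T$ is abelian then $G=V\rtimes H$ is affine with $V=\F_p^{\,d}$, $H\le\mathrm{GL}(d,p)$ irreducible and $n=p^{d}$; as $V$ is abelian, the non-abelian chief factors of $G$ are exactly those of $H$, and the problem reduces to the linear analogue that an irreducible $H\le\mathrm{GL}(d,p)$ has at most $d\log_5 p=\log_5 n$ non-abelian chief factors. That statement is proved by a parallel induction on $d$ via Clifford theory and Aschbacher's subgroup-structure theorem — an imprimitive linear group lies in a wreath product and is treated as in the imprimitive permutation case, a tensor-decomposable group reduces to smaller dimension, and a nearly simple linear group has only boundedly many non-abelian chief factors — and it is mutually recursive with the permutation statement through the affine case. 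This affine/linear step, together with keeping the mutual induction well-founded, is what I expect to be the main obstacle; the remaining cases are routine once the kernel estimate of the previous paragraph is in hand.
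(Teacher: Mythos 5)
There are two genuine gaps. First, a recurring structural error: you assume that a group with trivial soluble radical has all of its non-abelian chief factors inside its socle, justified by the claim that $\Out(N)$ is soluble for $N=T_1\times\cdots\times T_t$ ``by Schreier's conjecture''. Schreier's conjecture applies to a single simple group; for a semisimple group one has $\Out(T^t)\cong\Out(T)\wr S_t$, which contains $\Alt(t)$ and is far from soluble. Concretely, $G=A_5\wr A_5\le S_{25}$ (imprimitive action) has trivial soluble radical, $\Soc(G)=A_5^{\,5}$, and a non-abelian chief factor $G/\Soc(G)\cong A_5$ sitting \emph{above} the socle; so counting $G$-orbits of socle components does not count all non-abelian chief factors. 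This breaks your proof of the bound $n$ (you only bound the factors inside $\Soc(G/R)$ by $t\le n/5$), and the same error reappears in your ``controlling the kernel'' step, where you assert that the non-abelian $G$-chief factors of $K$ correspond to $G$-orbits on components of the (not actually semisimple) group $K/S$. The paper avoids this entirely: the bound $n$ is obtained by writing an intransitive group as a subdirect subgroup of $G_1\times G_2$ and inducting down to the transitive case, and the kernel step is handled by citing \cite[Lemma 5.8]{Tracey18}, which tracks a full chief series of $R=\mathrm{Stab}_G(\Delta)^\Delta$ through $K\le R^s$ rather than just the socle, yielding $b(G)\le b(R)+b(G^\Sigma)$ — essentially the inequality you want, but proved by a mechanism that does not rely on the false socle claim.

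Second, your primitive case is not a proof. The paper disposes of primitive groups by citing \cite[Theorem 2.10]{Pyber}; you instead set out to reprove that result via O'Nan--Scott, and while the non-abelian-socle types are plausibly sketched (modulo the same caveat about chief factors above the socle, which you do handle correctly there via the induced transitive action on the $k$ simple factors), the affine case — bounding the non-abelian chief factors of an irreducible subgroup of $\mathrm{GL}(d,p)$ by $d\log_5 p$ — is explicitly left as an ``obstacle'' with only a plan involving Aschbacher's theorem and a mutual induction. That is the hard core of Pyber's theorem, and without it the transitive bound is not established. Either cite Pyber directly (as the paper does) or carry out the linear-group induction in full; as written, the proposal proves neither assertion of the lemma.
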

\begin{proof}
If $G$ is intransitive, then there is a partition $n_1+n_2=n$ with $n_1,n_2>1$ and groups $G_i\le S_{n_i}$ such that $G_1$ is transitive and $G$ is a subdirect subgroup in $G_1\times G_2$. In particular, the number of non-abelian chief factors of $G$ is at most the sum of the numbers of non-abelian chief factors of $G_1$ and $G_2$. Thus, since $\log(n_i)\le n_i$, the result will follow from the transitive case and induction.  

So assume that $G$ is transitive. When $G$ is primitive, the claim is \cite[Theorem 2.10]
 {Pyber}. So assume that $G$ is imprimitive, and let $\Sigma$ be a block system with $|\Sigma|$ minimal, so $G^\Sigma$ is primitive. Let $\Delta\in\Sigma$, and write $|\Delta|=r$ and $|\Sigma|=s$, so that $n=rs$. Let $R:=\mathrm{Stab}_G(\Delta)^{\Delta}$, and let $K$ be the kernel of the action of $G$ on $\Sigma$. Then by \cite[Lemma 5.8]{Tracey18}, $G$ embeds as a subgroup of $R\wr G^{\Sigma}$ in such a way so that if $X/Y$ is a non-abelian chief factor of $R$, then $(G\cap X^s)/(G\cap Y^s)$ is either trivial or a non-abelian chief factor of $G$. On the other hand, since $K:=G\cap R^s$, any chief series $1=X_0<\hdots<X_m=:R$ for $R$ yields a normal series $1=X_0^s\cap K<\hdots<X_m^s\cap K=K$ for $K$. Thus, all non-abelian chief factors of $G$ contained in $K$ are of the form $(G\cap X^s)/(G\cap Y^s)$ for $X/Y$ a non-abelian chief factor of $R$.
 It follows that if $b(G)$ denotes the number of non-abelian chief factors of $G$, then $b(G)\le b(R)+b(G^{\Sigma})$. The result then follows by induction.   
\end{proof}

We close this section with the following result from \cite{LucTha}.
\begin{lemma}\label{lem:LTgen}
Let $N=S^b$ be a minimal normal subgroup of a
finite group $G$, with $S$ non-abelian simple. Suppose that $G = \langle g_1,\ldots, g_d\rangle N$, with $d\geq 2$. Set $f(N):=90b|\Out(S)|/53$ and $s:=\lceil 1+\log_{|N|}(f(N)\delta_G(N))\rceil$. If $d\geq s$, then there exist $n_1,\hdots,n_d\in N$ such that $G = \langle g_1n_1,\ldots, g_sn_s,g_{s+1},\hdots,g_d\rangle$.
\end{lemma}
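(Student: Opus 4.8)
The plan is to run a counting argument. Pick $n_1,\dots,n_s$ independently and uniformly at random from $N$, write $H=\langle g_1n_1,\dots,g_sn_s,g_{s+1},\dots,g_d\rangle$, and show that $\Pr[H\ne G]<1$; this produces suitable $n_1,\dots,n_s$, and one may then take $n_{s+1},\dots,n_d$ trivial. Since $g_in_i$ and $g_i$ have the same image in $G/N$ and $G=\langle g_1,\dots,g_d\rangle N$, we have $HN=G$ for every choice of the $n_i$, so $H=G$ exactly when $H$ lies in no maximal subgroup $M$ of $G$ with $N\not\le M$; note that any such $M$ automatically satisfies $MN=G$. For a fixed maximal $M$ with $N\not\le M$ and each $i$, the set $\{n\in N:g_in\in M\}=N\cap g_i^{-1}M$ is non-empty because $g_i\in G=MN$, hence is a coset of $M\cap N$ in $N$, so $\Pr[g_in_i\in M]=|M\cap N|/|N|=|G:M|^{-1}$ using $|N:M\cap N|=|G:M|$. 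By independence and a union bound,
\[
  \Pr[H\ne G]\ \le\ \sum_{M}|G:M|^{-s},
\]
the sum being over all maximal subgroups $M$ of $G$ with $N\not\le M$. (One could discard the $M$ failing to contain $g_{s+1},\dots,g_d$, since those contribute $0$; this refinement is not needed.) So it suffices to prove $\sum_M|G:M|^{-s}<1$.

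Bounding $\sum_M|G:M|^{-s}$ is the real work, and here I would invoke the machinery of \cite{LucTha} directly --- Proposition~6 and the results that precede it, with which we are assuming familiarity. One sorts the maximal subgroups $M$ with $N\not\le M$ by their core $K=\mathrm{Core}_G(M)$: since $N$ is a minimal normal subgroup and $N\cap K$ is a proper $G$-invariant subgroup of $N$, we get $N\cap K=1$, so $NK/K$ is a minimal normal subgroup of the primitive group $G/K$ that is $G$-isomorphic to $N$, and $M/K$ is a core-free maximal subgroup of $G/K$. The O'Nan--Scott description of finite primitive groups with non-abelian socle then controls, for each such $K$, the number of core-free maximal subgroups of $G/K$ of each index: the factor $b\,|\Out(S)|$ in $f(N)$ reflects the automorphisms of $N\cong S^b$ permuting and individually twisting the $b$ simple direct factors (for $b=1$, the outer automorphisms of $S$), while the core-free maximal subgroups meeting the socle in a proper non-trivial subgroup contribute a geometric tail governed by the number of generating $s$-tuples of a finite simple group; the universal constant $53/90$ --- the minimum over non-abelian simple $S$ of the probability that two random elements generate $S$ --- enters through the worst case of this tail. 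Finally, the crown attached to $N$ --- recall that $\Soc(G/R_G(N))$ is the direct product of the $\delta_G(N)$ chief factors $G$-equivalent to $N$ --- restricts which primitive quotients $G/K$ can occur and brings in the factor $\delta_G(N)$. Assembling these estimates, one finds $\sum_M|G:M|^{-s}<1$ as soon as $s\ge 1+\log_{|N|}\!\bigl(f(N)\,\delta_G(N)\bigr)$.

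Since $s=\lceil 1+\log_{|N|}(f(N)\delta_G(N))\rceil$ meets this bound, $\Pr[H\ne G]<1$, so the required $n_1,\dots,n_s$ exist and the proof is complete. The hypothesis $d\ge s$ is used only so that $s$ of the given generators are available to be modified, and the hypothesis $d\ge 2$ --- which is in fact forced here, since $f(N)\delta_G(N)\ge 90/53>1$ makes $s\ge 2$ --- keeps the geometric estimates convergent. I expect essentially all the difficulty to lie in the bound $\sum_M|G:M|^{-s}<1$: the reduction above is elementary, but that estimate rests on the classification of finite simple groups and on the O'Nan--Scott theorem, and is precisely where $53/90$, the factor $b\,|\Out(S)|$ and the quantity $\delta_G(N)$ all appear --- it is in substance Proposition~6 of \cite{LucTha}, which serves as the engine of this argument. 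A minor additional point is keeping the inequality strict (rather than merely $\le 1$) when $\log_{|N|}(f(N)\delta_G(N))$ is near an integer, which is handled by the precise form of the estimate in \cite{LucTha}.
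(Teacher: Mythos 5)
Your reduction to the inequality $\sum_M |G:M|^{-s} < 1$ (sum over maximal subgroups $M$ of $G$ with $N\not\le M$) is correct and cleanly executed: the observation that $HN=G$ forces any maximal overgroup of $H$ to supplement $N$, and the computation $\Pr[g_in_i\in M]=|G:M|^{-1}$ via $|N:M\cap N|=|G:M|$, are both fine. But this reduction is elementary and is not where the lemma lives. The entire content of the statement is the specific value $s=\lceil 1+\log_{|N|}(f(N)\delta_G(N))\rceil$ with $f(N)=90b|\Out(S)|/53$, i.e.\ the quantitative claim that this $s$ makes the relevant sum (or probability) small enough. At exactly this point your argument says ``assembling these estimates, one finds $\sum_M|G:M|^{-s}<1$'' after a qualitative description of where the constants $53/90$, $b|\Out(S)|$ and $\delta_G(N)$ ought to come from. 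That is an assertion, not a proof: you never count the maximal subgroups of each index supplementing $N$, never bound the tail, and never derive the threshold $1+\log_{|N|}(f(N)\delta_G(N))$. Since the lemma is precisely a recalibration of the constant in \cite[Lemma 12]{LucTha} (which gives $\lceil 8/5+\log_{|N|}\delta_G(N)\rceil$), a proof must exhibit the computation that produces the new constant; this is the gap.

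There is also a structural mismatch worth flagging. You say you would ``invoke Proposition 6 of \cite{LucTha}'' to finish, but Proposition 6 and its proof are not phrased in terms of a union bound over maximal subgroups: they run a crown-based orbit-counting argument (a group $\Gamma$ acting semiregularly on the set $\Omega$ of generating tuples of the monolithic group $L$ with prescribed images modulo the socle -- the same machinery used in the proof of Lemma~\ref{lem:Exp} above). The paper's proof of the present lemma works \emph{inside} that argument: it identifies the final two displayed inequalities in the proof of \cite[Proposition 6]{LucTha} and replaces the hypothesis $|A|^{t-8/5}\ge\delta$ by $\frac{53|A|^{t-1}}{90 n |\Out(S)|}\ge\delta$, using the penultimate inequality to trade $\log_2|A|$ for $n|\Out(S)|$. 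Your maximal-subgroup framework could in principle be made to work (it is the classical Hall/Eulerian-function route), but you cannot simply cite Proposition 6 from within it; you would have to redo the index-by-index count of supplements of $N$, which is exactly the work you have omitted.
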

\begin{proof}
This is proved in \cite[Lemma 12]{LucTha} with $s$ replaced by $\lceil 8/5+\log_{|N|}(\delta_G(N))\rceil$. The invariant $\lceil 8/5+\log_{|N|}(\delta_G(N))\rceil$ comes from \cite[Proposition 6]{LucTha}. We can now see immediately from the final two displayed inequalities in the proof of \cite[Proposition 6]{LucTha} that one can replace $\lceil 8/5+\log_{|N|}(\delta_G(N))\rceil$ with our value of $s$ above.  
(In the notation of \cite[Proposition 6]{LucTha}, we are using the
penultimate inequality to replace $\log_2 |A|$ by $n|\Out (S)|$
in the final inequality,
and assuming that $\frac{53 |A|^{t-1}}{90 n |\Out (S)|} \ge \delta$
rather than $|A|^{t-8/5} \ge \delta$.)
\end{proof}

\section{The algorithm}\label{sec:algo}
Our algorithm involves the availability of uniformly distributed random
elements of a group $G$. There is an extensive discussion of this topic in
\cite[Section 3.2]{HEO}. But in this paper we are working in the context of
permutation groups with a known base and strong generating set, where group
elements have a normal form involving transversals of a stabiliser chain, as
described in \cite[Section 4.4.1]{HEO}. In this situation we can easily choose
random elements of the group by choosing random elements of these transversals,
which involves choosing at most $n$ random integers in the range
$[1\cdot\cdot\,n]$.  As we shall see, the expected number of random group
elements required in the algorithm is bounded by a polynomial function of $n$.
(The largest number arises in the choice of $O(n^2 \log n)$ sequences of random
elements $n_1,\ldots,n_d$ in the case $N$ non-abelian with $t'>d$.)

The input to the algorithm is a permutation group $G \le S_n$ defined by
a generating set.  We start by computing a chief series
$1 = N_0 < N_1 < ... < N_u = G$  of $G$. This can be done in polynomial time:
in fact in \emph{nearly linear} time~\cite[Section 6.2.7]{Seress}.
We have $u \le n-1$ by \cite[Theorem 10.0.5]{Menezes}.

Now we compute a generating set of the top factor $G/N_{u-1}$, which is simple.
If it is abelian then it is cyclic and any element of $G \setminus N_{u-1}$
generates $G$ modulo $N_{u-1}$. Otherwise it can be generated by two elements,
and by \cite[Theorem 3.1.7]{Menezes} a random pair of elements of $G$ has a
probability of at least $53/90$ of generating $G$ modulo $N_{u-1}$.  So we find
such a pair by repeatedly choosing random pairs and doing generating tests,
for which we can use \RSS.

In the main ``lifting'' step of the algorithm, we have found elements
$g_1,\ldots,g_d$ that generate $G$ modulo $N_k$ for some $k$ with
$1 \le k \le u-1$, and we want to lift them to a generating set of
$G$ modulo $N_{k-1}$.
\begin{lemma}\label{lem:liftingcases}\cite[Proposition 9 and the proof of Corollary 13]{LucTha}
 One of the following holds.
 \begin{description}
     \item[Lifting Case 1] there exist $n_1, \ldots,n_d$ in $N_k$ such that
$g_1n_1, \ldots,g_dn_d$ generate $G$ modulo $N_{k-1}$; or \item[Lifting Case 2]
there exist $n_1, \ldots,n_{d+1}$ in $N_k$ such that
$g_1n_1, \ldots,g_dn_d, n_{d+1}$ generate $G$ modulo $N_{k-1}$.
 \end{description}
 Note in particular that $d \le u+1-k$.
\end{lemma}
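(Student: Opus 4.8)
The plan is to reduce to the case of a single minimal normal subgroup and then split according to whether it is abelian. Put $\bar G = G/N_{k-1}$, $\bar N = N_k/N_{k-1}$ and $\bar g_i = g_iN_{k-1}$; then $\bar N$ is a minimal normal subgroup of $\bar G$ and $\bar G = \langle \bar g_1,\ldots,\bar g_d\rangle\bar N$. Lifting Case~1 is the assertion that some lift $(\bar g_1\bar n_1,\ldots,\bar g_d\bar n_d)$ generates $\bar G$, and Lifting Case~2 that such a lift together with one further element of $\bar N$ does so. If $H:=\langle \bar g_1,\ldots,\bar g_d\rangle$ already equals $\bar G$, then Lifting Case~1 holds trivially with all $n_i=1$; so assume $H$ is proper, and note $H\bar N=\bar G$.

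I would first settle the abelian case. Here $\bar N$ is an irreducible $\bar G$-module, hence is generated as a normal subgroup of $\bar G$ by a single element, so $d(\bar G)\le d(\bar G/\bar N)+1$; since the $\bar g_i\bar N$ generate $\bar G/\bar N$ we have $d(\bar G/\bar N)\le d$, whence $d(\bar G)\le d+1$. If $d(\bar G)\le d$, then Gasch\"utz's lemma on lifting generating sets modulo an abelian normal subgroup yields $\bar n_i\in\bar N$ with $\langle \bar g_1\bar n_1,\ldots,\bar g_d\bar n_d\rangle=\bar G$, which is Lifting Case~1. Otherwise $d(\bar G)=d+1$; take any lift of $(\bar g_1,\ldots,\bar g_d)$ and a nontrivial $\bar n_{d+1}\in\bar N$. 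The intersection of the subgroup these generate with $\bar N$ is normalised by that subgroup and, $\bar N$ being abelian, by $\bar N$ as well, hence is normal in $\bar G$; being nontrivial it equals $\bar N$ by minimality, so the elements generate $\bar G$ and Lifting Case~2 holds (the $\bar n_i$ lift to elements of $N_k$, as required).

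The substantive case is $\bar N=S^b$ non-abelian. Here the non-normality of subgroups of $\bar N$ makes the argument above break down, and I would instead invoke \cite[Proposition~9]{LucTha} together with the bookkeeping in the proof of \cite[Corollary~13]{LucTha}: this is precisely the dichotomy between the two Lifting Cases for a non-abelian minimal normal subgroup. That argument analyses the possible shapes of $H\cap\bar N$ as a subdirect-type subgroup of $S^b$ and the behaviour of the chief factors that are $\bar G$-equivalent to $\bar N$, and I would not reprove it. (If $d=1$, either the same dichotomy is checked directly, or one harmlessly pads the generating tuple to size $2$.) I expect this non-abelian dichotomy to be the only genuinely hard ingredient.

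Finally, it remains to justify $d\le u+1-k$, which I would prove by induction along the lifting steps, as in the proof of \cite[Corollary~13]{LucTha}. For the top factor $G/N_{u-1}$ the algorithm has produced either $1$ or $2$ generators, so $d\le 2=u+1-(u-1)$ when the first lifting step (with $k=u-1$) begins. In the lifting step from $N_k$ to $N_{k-1}$, Lifting Case~1 leaves the number of generators equal to $d$ and Lifting Case~2 raises it to $d+1$, while $k$ drops by $1$; in both cases the new number of generators is at most $u+1-(k-1)$, so the inequality propagates down the chief series.
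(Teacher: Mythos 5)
Your proposal is correct and follows the same route as the paper, which establishes this lemma simply by citing \cite[Proposition 9 and the proof of Corollary 13]{LucTha}; in particular you defer the one substantive ingredient, the dichotomy for a non-abelian minimal normal subgroup, to exactly that source. The details you supply yourself --- the Gasch\"utz/minimality argument in the abelian case and the induction down the chief series giving $d \le u+1-k$ --- are correct and match what the cited proof of Corollary 13 of \cite{LucTha} does.
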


\begin{notation}\label{not:1}
Fix $1\le k\le u-1$ and set $N:=N_k/N_{k-1}$. Set $\delta:=\delta_{G/N_{k-1}}(N)$. As is observed in \cite{LucTha} there is no known efficient method of
calculating $\delta$, so in our algorithm below we use the possibly larger
number $\delta':=\delta'_{G/N_{k-1}}(N)$ which, as in \cite{LucTha}, we define to be the number of chief factors  of
$G/N_{k-1}$ having order $|N|$. This could potentially increase the running
time, but will not affect the correctness of the algorithm.
Finally, set
$t := \lceil{8/5+\log_{|N|}(\delta)}\rceil$
and $t' := \lceil{8/5+\log_{|N|}(\delta')}\rceil$.
Assume that we have found elements
$g_1,\ldots,g_d$ that generate $G$ modulo $N_k$ for some $k$ with
$1 \le k \le u-1$, and we want to lift them to a generating set of
$G$ modulo $N_{k-1}$.
\end{notation}

\textbf{\underline{The lifting step when $N$ is abelian}}. If $N$ is abelian and hence isomorphic to $C_p^l$ for some prime $p$ and $l>0$,
then we proceed as in \cite[Remark 11]{LucTha}. That is, we find a generating
set $e_1,\ldots,e_l$ of $N_k$ modulo $N_{k-1}$ (we can choose these from
the generators of $N_k$) and, for each $i=1,\ldots,d$ and
$j = 1,\ldots,l$, we test whether $g_1,\ldots,g_{i-1},g_ie_j,g_{i+1},\ldots,g_d$
generate $G$ modulo $N_{k-1}$. If so then we are in Lifting Case 1 and we are
done. If not, then by \cite[Proposition 9]{LucTha} we are in Lifting Case 2,
and furthermore $G$ is generated modulo $N_{k-1}$ by $g_1,\ldots,g_d,x$ for
any $x \in N_k \setminus N_{k-1}$.

\textbf{\underline{The lifting step when $N$ is non-abelian}}. If $N$ is non-abelian then $N \cong S^b$ for some non-abelian simple
group $S$ and $b \ge 1$. 

\noindent{\textbf{The case $\mathbf{d = 1}$.}} In this case, since $N$ is not cyclic, we know that we must be in Lifting
Case 2, so we keep testing random pairs $n_1, n_2$ (for which we can use \RSS in
the implementation) until $g_1n_1$ and $n_2$ generate $G$ modulo $N_{k-1}$ and,
as we shall demonstrate in the next section, the expected number of pairs that
we need test is at most $18n/53$.

\noindent{\textbf{The case $\mathbf{d>1}$.}} Let $t$ and $t'$ be as in Notation \ref{not:1}. There are two subcases.

\noindent\textbf{In the  case where $\mathbf{t' \le d}$} (and hence also $t\le d$), we are in Lifting Case 1 by
\cite[Lemma 12]{LucTha}, and we repeatedly test sequences $g_1n_1,\ldots,g_dn_d$
for random $n_1,\dots,n_d \in N$ (using \RSS in the implementation) until we
find the required generators. As we shall show in the next section, the
expected number of sequences that we need to choose is at most $18n/53$.

\textbf{So assume that $\mathbf{t'>d}$.} Then we do not know whether we are in Lifting Case 1 or 2.
In that case, we choose (at most) $90 d \delta' \log |N|/53$ random sequences
$n_1,\ldots,n_d$, and test them all (using \RSS in the implementation).
If we find generators $g_1n_1, \ldots ,g_dn_d$ of $G$ modulo $N_{k-1}$,
then we are done.

If not, then we do an exhaustive test of all $|N|^d$ such sequences using \SS.
If we still fail to find one then we know that we are in Lifting Case 2,
so we keep testing random sequences  $n_1, \ldots ,n_{d+1}$ (using \RSS in the
implementation) until we find one such that
$g_1n_1, \ldots ,g_dn_d, n_{d+1}$ generate $G$ modulo $N_{k-1}$.
The expected number of trials is again at most $18n/53$.

\section{Proof of polynomial running time}\label{sec:proof}

As in Section \ref{sec:algo}, the input to the algorithm is a permutation group $G \le S_n$ defined by
a generating set. We shall prove that the expected number of generating tests applied in the
algorithm that we have described is $O(n^2\log{n})$, and $O(n^2/\sqrt{\log n})$
when $G$ is transitive, thereby proving Theorem~\ref{thm:main}.
Many of the processes involved are easily seen to be
polynomial-time, usually linear in $n$, and will not require comment.
When we refer to the \emph{cost} of part of the algorithm, we shall mean the
(expected) number of generating tests required. The \emph{Lifting Cases} are those listed in Lemma \ref{lem:liftingcases}.

\noindent{\textbf{Set-up.}} We may assume that a chief series
$1 = N_0 < N_1 < ... < N_u = G$  of $G$ has been computed. Fix $1\le k\le u-1$ and set $N:=N_k/N_{k-1}$. Also, let $\delta:=\delta_{G/N_{k-1}}(N)$,
$\delta':=\delta'_{G/N_{k-1}}(N)$, $t := \lceil{8/5+\log_{|N|}(\delta)}\rceil$
and $t' := \lceil{8/5+\log_{|N|}(\delta')}\rceil$, as in Section \ref{sec:algo}. Suppose that we have found elements
$g_1,\ldots,g_d$ that generate $G$ modulo $N_k$ for some $k$ with
$1 \le k \le u-1$, and we want to lift them to a generating set of
$G$ modulo $N_{k-1}$.

\noindent\textbf{\underline{Cost of the lifting step when $N$ is abelian}.} When $N$ is abelian then as in Section \ref{sec:algo}, we can choose the elements $e_1,\ldots,e_l$ from from the elements of the generating set of $N_k$ returned by the chief
series algorithm. The chief series has length $u\le n-1$, so finding
$e_1,\ldots,e_l$ requires at most $n-1$ generating tests.
As explained in Section \ref{sec:algo}, we then apply up to $dl$ generating tests (for which we must use \SS) to
see if we are in Lifting Case 1.
It is proved in \cite[Theorem 1.2]{GlasbyEtAl} that a composition series of
a subgroup of $\Sym(n)$ with $s$ orbits has length at most $4(n-s)/3$, so the
sum of the values of $l$ arising from all elementary abelian chief factors is
at most $4(n-1)/3$. Since $d \le n-1$, the total cost of the algorithm
for all elementary abelian composition factors is at most $4(n-1)^2/3$
generating tests.
Moreover, when $G$ is transitive we have $d = O(n/\sqrt{\log n})$
by~\cite{LucMenMor}, and so the number of generating tests required is
$O(n^2/\sqrt{\log n})$.

\noindent\textbf{\underline{Cost of the lifting step  when $N$ is non-abelian}.} So assume now that $N$ is non-abelian. We first justify one of our key claims from Section \ref{sec:algo}.

\begin{lemma}\label{lem:Exp}
If we are in Lifting Case 1 [resp. Lifting Case 2], then the expected number of choices of random sequences $n_1,\hdots,n_d$ [resp. $n_1,\hdots,n_d,n_{d+1}$] of elements of $N_k/N_{k-1}$ needed to find a generating set $g_1n_1,\hdots,g_dn_d$ [resp. $g_1n_1,\hdots,g_dn_d,n_{d+1}$] of $G/N_{k-1}$ is at most $18n/53$.    
\end{lemma}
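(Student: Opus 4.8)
The plan is to reduce everything to a single probabilistic estimate: the probability that a uniformly random choice of the relevant sequence of elements of $N = N_k/N_{k-1}$ yields a generating set is bounded below by $53/(18n)$, since then the number of trials needed is a geometric random variable with expectation at most $18n/53$. So the whole lemma amounts to producing that lower bound on the success probability in each of the two Lifting Cases.

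For Lifting Case 1, I would argue as follows. By hypothesis there \emph{exists} at least one sequence $(n_1,\dots,n_d) \in N^d$ with $\langle g_1n_1,\dots,g_dn_d\rangle = G$ modulo $N_{k-1}$; the question is how many such sequences there are relative to the total $|N|^d$. Here I expect to invoke the counting machinery underlying \cite[Proposition 6]{LucTha} (the same source cited in Lemma~\ref{lem:LTgen}): the number of ``bad'' sequences that fail to generate modulo each relevant copy of the socle is controlled, and the proportion of good sequences is at least roughly $1 - (\text{number of }G\text{-equivalent chief factors})/|N|^{d - 8/5}$, i.e.\ essentially $1/(f(N)\delta)$ or better once $d \ge t$. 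Then I would use Lemma~\ref{mindegfac}, $P(G) \ge b\,\delta_G(N)\,P(S)$, together with $P(G) \le n$ and the trivial bound $P(S) \ge 5$ (or whatever small constant the smallest non-abelian simple group gives) and $|\Out(S)|$ being small relative to $|S|$, to conclude that $b\,\delta\,|\Out(S)|$ — and hence the reciprocal of the success probability — is $O(n)$, with the constant working out to at most $18/53$ after plugging in $f(N) = 90b|\Out(S)|/53$.

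For Lifting Case 2 the argument is nearly identical but with $d$ replaced by $d+1$ in the sequence length; since adding an extra free factor $n_{d+1} \in N$ only increases the proportion of generating sequences, the same lower bound on the success probability holds a fortiori. I should also handle the degenerate small cases ($d=1$, or $N$ cyclic — but $N$ non-abelian here so not cyclic) uniformly, and note that when $d \ge 2$ the $53/90$ two-generation probability for the simple group $S$ itself (from \cite[Theorem 3.1.7]{Menezes}) feeds into the base of the estimate, which is exactly where the $53$ in the denominator originates.

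The main obstacle I anticipate is getting the constant exactly right: one has to track the $8/5$ exponent, the $f(N) = 90b|\Out(S)|/53$ factor, and the bound $P(S) \ge 5$ through the inequality $b\delta P(S) \le P(G) \le n$ carefully enough to land on $18/53$ rather than some larger constant. This is precisely the kind of bookkeeping done in the proof of \cite[Proposition 6]{LucTha} and in Lemma~\ref{lem:LTgen} above, so I would lean heavily on those computations rather than redo them; the new content is just the substitution $\delta_G(N) \le P(G)/(bP(S)) \le n/(bP(S))$ coming from Lemma~\ref{mindegfac}, which converts the $\delta$-dependence of the Lucchini–Thakkar bound into an explicit linear dependence on $n$.
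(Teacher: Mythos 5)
Your high-level strategy is the same as the paper's: reduce the lemma to a lower bound of $53/(18n)$ on the per-trial success probability, obtain that bound from the counting machinery in the proof of \cite[Proposition 6]{LucTha}, and convert $\delta$ into a function of $n$ via Lemma~\ref{mindegfac}. But there is a genuine gap in the central estimate. The per-trial bound you propose, ``essentially $1/(f(N)\delta)$'', is not the right quantity: $f(N)=90b|\Out(S)|/53$ belongs to the \emph{existence} threshold (it tells you when a generating lift is guaranteed to exist, as in Lemma~\ref{lem:LTgen}), not to the probability of hitting one. Even granting it, tracking $b\delta\le n/P(S)$ and $|\Out(S)|\le 2P(S)/3$ through $f(N)\delta$ yields an expected cost of $60n/53$, not $18n/53$; and your two descriptions of the good proportion ($1-\delta/|N|^{d-8/5}$ versus $1/(f(N)\delta)$) are not consistent with each other, which signals that the mechanism has not actually been identified.

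The correct estimate is a product of two factors, neither of which involves $b$ or $|\Out(S)|$. After reducing (via $R_{G/N_{k-1}}(N)$ and \cite[Lemma 7]{LucTha}) to $\overline{G}=L_\delta\le L^\delta$ with $\Soc(L)\cong N$, one shows: (i) by \cite[Lemma 5]{LucTha}, with probability at least $53/90$ the tuple of $\delta$-th projections $((g_1n_1)^{(\delta)},\dots,(g_dn_d)^{(\delta)})$ lands in the set $\Omega$ of generating $d$-tuples of $L$ with the prescribed images in $L/N$; and (ii) conditional on landing in $\Omega$, the probability of actually generating $\overline{G}$ is at least $1/\delta$, because the group $\Gamma$ acts semiregularly on $\Omega$, the failures are confined to the $\delta-1$ orbits of the tuples $(g_1^{(i)},\dots,g_d^{(i)})$ for $i<\delta$, and the assumed existence of a generating lift forces $\Omega$ to have at least $\delta$ orbits, all of the same size. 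Step (ii) — the equal-orbit-size pigeonhole — is the idea missing from your sketch, and it is exactly what replaces your $f(N)$ factor by the bare $1/\delta$. Combining gives success probability at least $53/(90\delta)$, and then $\delta\le n/5$ (from $n\ge P(G)\ge b\delta P(S)\ge 5\delta$) gives $53/(18n)$ and hence the stated $18n/53$.
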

\begin{proof}
The proofs in Lifting Cases 1 and 2 are essentially identical, so assume that
we are in Lifting Case 1.  Recall from Section \ref{sec:defs} that there is a
normal subgroup $R_{G/N_{k-1}}(N)$ of $G/N_{k-1}$ such that the socle
of $\overline{G}:=(G/N_{k-1})/R_{G/N_{k-1}}(N)$ is the direct product of the
$\delta_{G/N_{k-1}}(N)$ chief factors of $G$ which are $(G/N_{k-1})$-equivalent
to $N$.

Assume first that $R_{G/N_{k-1}}(N)=1$, so that $G/N_{k-1}=\overline{G}$.
Then, as explained in \cite[Section 4]{LucTha}, we have $G/N_{k-1} \cong
L_\delta$, where $L$ is a monolithic primitive group as defined in
\cite[Section 3]{LucTha} with $A=\Soc(L)\cong N= N_k/N_{k-1}$. So the hypotheses of
\cite[Proposition 6]{LucTha} are satisfied by $L$ and $G/N_{k-1}$ with
$A \cong N= N_k/N_{k-1}$.  Our argument here uses the proof of that
proposition rather than the proposition itself and, since we are Lifing Case 1,
we are assuming that the conclusion of the proposition holds with $t=d$.

To help the reader follow the argument, we shall temporarily use the
notation of the proof of \cite[Proposition 6]{LucTha}, and so we replace
$G/N_{k-1}$ by $G = L_\delta$, and we shall use $g_1,\ldots,g_d$ to denote
their images in (the new) $G$. Recall that $G$ is a subgroup of $L^\delta$,
and $\Soc(G) \cong N^\delta$ and, as in the proof of the proposition,
we identify the normal subgroup $N$ of $G$ with the last component of
$\Soc(G)$, and $G/\Soc(G)$ can be identified with $L/A$.

There is a group $\Gamma$ acting semiregularly on a set $\Omega$
that consists of the sequences $(\bar{l}_1,\ldots,\bar{l}_d) \in L^d$ such
that $\langle \bar{l}_1,\ldots,\bar{l}_d \rangle = L$, and the image of
$\bar{l}_i$ in $L/N$ is equal to the image of $g_i$ in $G/A$ for
$1 \le i \le d$. For $g \in G \le L^\delta$ and $1 \le i \le \delta$, let
$g^{(i)} \in L$ be the projection of $g$ onto its $i$-th component.
Then the fact that the images of $g_1,\ldots,g_d$ generate $G/N$ implies that,
for $1 \le i \le \delta-1$, the $d$-tuples $(g_1^{(i)},\ldots,g_d^{(i)})$
lie in $\Omega$, and they lie in distinct orbits under the action of $\Gamma$.
Furthermore, for $n_1,\ldots, n_d \in N$, the elements
$g_1n_1,\ldots,g_dn_d$ generate $G$ if and only
$((g_1n_1)^{(\delta)},\ldots,(g_dn_d)^{(\delta)})$ is in $\Omega$
(i.e. if $\langle (g_1n_1)^{(\delta)},\ldots,(g_dn_d)^{(\delta)} \rangle = L$)
and, for $1 \le i \le \delta-1$,  does not lie in the same $\Gamma$-orbit as
$(g_1^{(i)},\ldots,g_d^{(i)})$.

Now, by \cite[Lemma 5]{LucTha} applied with $t=d$, the proportion of
$d$-tuples of elements $n_1,\ldots,n_d \in N$ for which
$((g_1n_1)^{(\delta)},\ldots,(g_dn_d)^{(\delta)}) \in \Omega$ is at least
$53/90$.  Furthermore, since we are assuming that there exist
$n_1,\ldots,n_d \in N$ with $G = \langle g_1n_1,\ldots,g_dn_d \rangle$, there
must be at least $\delta$ orbits of $\Gamma$ on $\Omega$.
Hence, since all such orbits  have the same size $|\Gamma|$,
among those $n_1,\ldots,n_d \in N$ for which
$((g_1n_1)^{(\delta)},\ldots,(g_dn_d)^{(\delta)})$ is in $\Omega$,
the proportion for which $G = \langle g_1n_1,\ldots,g_dn_d \rangle$ is at
least $1/\delta$. We conclude that, for random choice of $n_1,\ldots,n_d \in N$,
the probability that
$G = \langle g_1n_1,\ldots,g_dn_d \rangle$ is at least $53/(90 \delta)$.
Since $\delta \le n/5$ by Lemma~\ref{mindegfac}, this probability is at least
$53/(18n)$, and so the expected number of choices required to find a generating
set of $G$ is at most $18n/53$ as claimed. This completes the proof in the
case when $R_{G/N_{k-1}}(N)=1$, and we revert now to our original notation. 

Finally, if $R_{G/N_{k-1}}(N)\neq 1$, then the above shows that the probability that a random sequence $n_1,n_2,\ldots,n_d$ of elements of $N$ has the property that $\overline{G}=\langle \overline{g_1}\overline{n_1},\hdots,\overline{g_d}\overline{n_d}\rangle$ 
is at least $53/(18n)$. However, by \cite[Lemma 7]{LucTha},  $g_1n_1,\hdots,g_dn_d$ generates $G/N_{k-1}$ if and only if $\overline{g_1}\overline{n_1},\hdots,\overline{g_d}\overline{n_d}$ generates $\overline{G}$. The result follows.   
\end{proof}

We now follow the distinction of cases in Section \ref{sec:algo}.

\noindent{\textbf{The cost if $\mathbf{d=1}$ or $\mathbf{t'\le d}$.}}
As explained in Section \ref{sec:algo}, and applying Lemma \ref{lem:Exp}, the expected cost in these cases is at most $18n/53$. 

\noindent{\textbf{The cost if $\mathbf{t'> d}$.}}
Assume now that $t' > d$. Then, as explained in Section \ref{sec:algo}, we do not know whether we are in Lifting Case 1 or 2, so we perform at most $90d\delta'\log |N|/53$ generating sets to try and find generators $g_1n_1,\hdots,g_dn_d$ of $G$ modulo $N_{k-1}$. Now, $d < 13/5 + \log_{|N|}(\delta')$, so
$$90d\delta'\log |N|/53 < 234 \delta'\log |N|/53 + 90\delta'\log{\delta'}/53.$$ 
The sum of $\log |N|$ over all chief factors of $G$ is at most
$\log (n!) = O(n \log(n))$ so, since $\delta'\le n$ by Lemma \ref{lem:transchief},
the sum of the first of the terms on the right hand side of this inequality
over all chief factors of $G$ is $O(n^2 \log n)$.
Since there are at most $n$ such chief factors and $\delta' \le n$,
the same applies to the sum of the second terms.
So the total expected cost of tests of this type over all chief factors is
$O(n^2 \log n)$.
If $G$ is transitive, then $G$ has at most $\log{n}$ non-abelian chief factors
by Lemma~\ref{lem:transchief}, so the total expected cost we get above is
$O(n(\log{n})^2)$.

Suppose now that we have failed to find generators $g_1n_1,\hdots,g_dn_d$ of $G$ modulo $N_{k-1}$ via the generating tests above. We then perform an exhaustive test over all sequences $n_1,\ldots,n_d \in N^d$. We bound the cost of these exhaustive tests in each of the two possible cases:
\begin{enumerate}
\item We are in Lifting Case 1, but we fail to find the required elements
$n_1,\ldots ,n_d$. Then the expected cost is at most the probability that each
test using random $n_1,\ldots ,n_d$ fails, times the cost $|N|^d$ of the
exhaustive tests described at the end of Section~\ref {sec:algo}.
Since we perform $90d\delta'\log|N|/53$ tests using random $n_1,\ldots ,n_d$,
each of which has a probability
of at least $53/(18n) \le 53/(90\delta)$ of succeeding, and
$|N|^d = e^{d\log|N|}$, the expected cost is
(using $(1-1/x)^x < e^{-x}$ for $x>1$) at most
$$(1-53/(90\delta))^{90d\delta'\log|N|/53} e^{d\log|N|} < 1.$$
By Lemma \ref{lem:transchief}, this gives a
total expected cost over all chief factors of at most $n$.

\item We are in Lifting Case 2.
Let $f(N):=90b|\Out(S)|/53$. Then by Lemma \ref{lem:LTgen}, we have $d<\lceil 1 + \log_{|N|}(f(N)\delta)\rceil$.
By Lemma~\ref{mindegfac}, $n \ge b\delta P(S)$. Moreover,
$|\Out(S)| \le 2P(S)/3$ by Lemma~\ref{outmindeg} below,
so $f(N)\delta \le 60n/53$.

Thus, since we are in Lifting Case 2 and $d$ is an integer,
we have
$$d <1 + \log_{|N|}(f(N)\delta) \le 1+\log_{|N|}(60n/53)$$
so $|N|^{d-1} < 60n/53$,
and the cost $|N|^d$ of the tests described in the final paragraph of
Section~\ref{sec:algo} is less than $(60n/53)^{d/(d-1)}$.
\begin{align}\label{lab:est}
\text{This is at most $(60/53)^2n^2$ when $d=2$; and $(60/53)^{3/2}n^{3/2}$ when $d\geq 3$.}    
\end{align}

Let $m$ be the number of non-abelian chief factors for which Lifting Case 2
occurs. Then on the final occurrence we must have $d \ge 2+m-1$, and so
$2+m-1 < 1+\log_{|N|}(60n/53)$ and it is easily checked that this
yields  $m+1 < \log n$ for $n \ge 4$.
Then the total cost over all such factors is at most
\begin{align}\label{lab:est2}
O(\sum_{d = 2}^{m+1} n^{d/(d-1)}) < O(\sum_{d = 2}^{\log n} n^{d/(d-1)}) <
O(n^2 + n^{3/2}\log n) = O(n^2).    
\end{align}

Suppose now that $G$ is transitive. From \eqref{lab:est} and \eqref{lab:est2}, we see that to
verify the claimed bound of $O(n^2/\sqrt{\log n})$ generating tests, it
suffices to do this for a non-abelian factor with $d=2$.
In that case we have $|N| = |N|^{d-1} \le 60n/53$.
From Lemma 2.3.8\,(b) of \cite{GlasbyEtAl} we have $|\Out(S)|\le \log_2 |S|$
for all non-abelian finite simple groups $S$, and so
\begin{align*}
f(N) =\ & 90b|\Out(S)|/53 \le (90/53)b\log_2 |S| = (90/53)\log_2 |N| \le\\
&(90/53)\log _2 (60n/53).
\end{align*}
But $2=d<1 + \log_{|N|}(f(N)\delta)$ implies $|N| < f(N)\delta$, and
$\delta \le \log n$ by Lemma~\ref{lem:transchief}, so we get
$|N| = O((\log n )^2)$ and cost is $O(|N|^2) = O((\log n )^4)$
generating tests.
\end{enumerate}

Summing the cost estimates from each of the cases \textbf{$\mathbf{d=1}$ or $\mathbf{t'\le d}$} and $\mathbf{t'>d}$ above, and adding in our cost estimate from the case \textbf{$\mathbf{N}$ abelian}, we see that the total expected number of generating tests applied in the
algorithm that we have described is $O(n^2\log{n})$, and $O(n^2/\sqrt{\log n})$
when $G$ is transitive. This completes the proof of Theorem~\ref{thm:main}.

\section{Implementation and performance}\label{sec:performance}
Our \Magma implementation works for finite groups of matrices
(type {\textsf GrpMat} and solvable groups defined by a power-conjugate
presentation (type {\sf GrpPC}) as well as permutation groups
(type {\sf GrpPerm}) but since this article relates to permutation groups,
we shall use these only in our sample run-times below. We note however that
for soluble examples like $3^k:2$, it appears to be a little faster to work
with a group of type {\sf GrpPC}.

In the implementation
we deal separately first with cyclic groups, then with $p$-groups, and then
nilpotent groups. In these cases we can easily calculate $d(G)$ and find
generators as inverse images of a smallest sized generating set of
the abelian group $G/\Phi(G)$.

Otherwise, putting $k := \min(2,d(G/[G,G]))$, which we can easily calculate,
we know that $d(G) \ge k$, and we start by choosing a small number of
random subsets of $G$ of size $k$, and test (using \RSS) whether any of them
generate $G$. This enables us to find a smallest generating set very quickly
in a large number of groups.

If this fails, then we apply the algorithm that we have described in
Section~\ref{sec:algo}.
For the exhaustive test in the case when the chief factor $N$ is non-abelian,
we provide the option of computing the conjugation action of $N$ on the
set of $|N|^d$ candidates for generating sets, and only carrying out
generating tests on orbits representatives of this action. This results in
significant improvements in run-times.

Immediately after processing each chief factor we check whether the current
generators generate the whole group, and we can stop early if they do. This
results in significant variation from run to run of the times for the
examples in the table below, depending on whether or not we are lucky,
and find generators early.

We found that, for groups with $d(G)>2$ with both abelian and non-abelian
chief factors, if there is more than one chief series, then a series in
which the abelian factors are nearer the top of the group is likely
to result in faster run-times. This is because processing the abelian
factors is generally faster and, if the current generating set becomes
larger early in the complete calculation, then the exhaustive tests with
the non-abelian factors are less likely to be necessary.

Unfortunately, the \Magma\ {\sf ChiefSeries} intrinsic does the opposite,
and constructs series with the chief factors in the soluble radical at the
bottom. It is difficult to write an alternative version using existing
\Magma functionality, but we did manage a compromise in which the factors
in the non-abelian socle of the group are at the bottom. This results
in much faster times in the final three examples in the table below, where
the two reported times are using the default {\sf ChiefSeries} function, and 
our modified version.

It is not easy to find examples that take more than a few seconds to run.
For example the transitive groups of degree 48 almost all take less than
$0.1$ seconds.

The examples in the table below have been chosen because they are more
difficult, and some of them involve applying the exhaustive test for
a non-abelian chief factor. The split extensions in the table are \emph{crowns}:
that is, the complement of the specified normal subgroup acts in
the same way on each of the direct factors of the normal subgroup.
The two irreducible $4$-dimensional $\F_2A_5$-modules are denoted by $2^4_a$
and $2^4_b$, where the former is from the ntural module for $\SL(2,4)$
(which is isomorphic to $A_5$) and the latter is the deleted permutation
module.

\smallskip
\begin{tabular}{|r|r|r|r|}
\hline
Group & Degree & $d(G)$ & Time\\
\hline
$A_5^{19}$ & 95 &2& 0.6  \\
$A_5^{20}$ & 100 &3& 3 \\
$A_5^{100}$ & 500 &3& 252  \\
$L_3(2)^{57}$ & 399 &2& 75 \\
$L_3(2)^{58}$ & 506 &3& 236 \\
$L_3(2)^{100}$ & 700 &3& 350 \\
$L_3(2)^{100}:2$ & 800 &3& 513 \\
$A_6^{53}$ & 318 &2& 68 \\
$A_6^{54}$ & 324 &3&  804\\
$A_6^{100}$ & 600 &3&  1295\\
$3^{30}:2$ & 90 & 31 & 0.51\\
$3^{50}:2$ & 150 & 51 & 8\\
$3^{100}:2$ & 300 & 101 & 297\\
$(2^4_a)^{20}:A_5$ & 320 & 12 & 2.8\\
$(2^4_a)^{50}:A_5$ & 800 & 27 & 172\\
$(2^4_b)^{50}:A_5$ & 500 & 14 & 48\\
$((2^4_b)^5:A_5) \times A_5^{50}$ & 300 & 3 & 27, 16\\
$((2^4_b)^5:A_5) \times (L_3(2)^{58})$ & 456 & 3 & 259, 77\\
$(3^{10}:2) \times (L_3(2)^{58})$ & 436 & 11 & 224, 34\\
\hline
\end{tabular}

We did not attempt to implement the deterministic algorithm of Lucchini and
Thakkar described in \cite{LucTha} but, assuming that we used similar tricks to
those described above to deal with specific special cases, we would not expect a
significant difference in performance between our probabilistic and their
deterministic methods. Our results could be regarded as proving that the
expected performance of their algorithm is significantly faster than their
worst case analysis indicates.

\section{Appendix}
Lemma 2.3.8\,(a) of \cite{GlasbyEtAl} asserts that $P(S) > 2|\Out(S)|$ for all
finite simple groups $S$ except for $S=A_6$, when $P(S)=6$ and $|\Out(S)|=4$.
For the proof, the authors cite \cite[Lemma 2.7]{AschGur} which claims that the
result is true ``by inspection''. But in fact $S = \PSL(3,4)$ with $P(S)=21$
and $|\Out(S)| = 12$ is another exception to the general result, so we decided
to re-prove it here. Since $P(S)$ and $|\Out(S)|$ are known for all such $S$,
the proof is indeed a straightforward verification but, in view of the
error, it seems preferable to write it down.

In fact the only place in the paper~\cite{GlasbyEtAl}
where Lemma 2.3.8\,(a) of~\cite{GlasbyEtAl} is used
is in Case 2 (simple diagonal) of their proof of Theorem 1.6 (page 568)
where what they actually need is $|\Out(S)| < |S|/16$ for all non-abelian
simple groups $S$. This follows from Lemma 2.3.8\,(b) of \cite{GlasbyEtAl},
which (correctly) states that $|\Out(S)| \le \log_2 |S|$, and
$\log_2 |S| < |S|/16$ for $|S|>108$. So to complete the proof of
\cite[Theorem 6.1]{GlasbyEtAl} we need only observe
that $|\Out(A_5)| = 2 < |A_5|/16 = 3.75$.

In our proof of the following, we will use the estimates for primes $p$ and positive integers $e$: $p^{2e}>3e$ and $p^{3e}>6e$ for all $p$ and $e$; and $p^{e}+1 > 6e$ for $p^e>16$.  

\begin{lemma}\label{outmindeg}
Let $S$ be a finite simple group. Then $P(S)>3|\Out(S)|$ except when
$(S,P(S),|\Out(S)|) = (A_6,6,4)$, $(\PSL(3,4),21,12)$, $(A_5,5,2)$, or
$(\PSL(2,8),9,3)$.
\end{lemma}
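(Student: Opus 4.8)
The plan is to prove this by a direct case-by-case verification over the (standard) classification of finite simple groups, using the known formulas for $|\Out(S)|$ and the known values (or lower bounds) for $P(S)$. Since the statement is essentially a cleaned-up version of Lemma 2.3.8(a) of \cite{GlasbyEtAl}, I would organise the proof exactly along the lines of the families: alternating groups, sporadic groups, and the groups of Lie type, treating the Lie type groups according to their (untwisted or twisted) type.

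First I would dispose of the easy families. For $S = A_m$ with $m \ge 5$, $m \ne 6$, we have $|\Out(S)| = 2$ and $P(S) = m \ge 5 > 6$, so the inequality holds with room to spare; $A_6$ is the listed exception with $P = 6$, $|\Out| = 4$. For the $26$ sporadic groups (and the Tits group, if one counts it here), $|\Out(S)| \le 2$ while $P(S)$ is at least $11$ (attained by $M_{11}$), so $P(S) > 3|\Out(S)|$ trivially; this is just a finite check against the Atlas. This leaves the groups of Lie type, which is where the real work lies.

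For the Lie type groups I would use the general shape $|\Out(S)| = d \cdot f \cdot g$, where $d$ is the order of the group of diagonal automorphisms modulo inner, $f$ is the order of the field automorphism group (so $f = e$ when the defining field is $\F_{p^e}$, up to the twisting adjustment), and $g$ is the order of the graph automorphism group ($g \in \{1,2\}$ except $g = 6$ for $D_4$). I would then compare this against the known value of $P(S)$, which for most families grows like a power of the field size $q = p^e$ (e.g. roughly $q^{n-1}$ for $\PSL(n,q)$ acting on projective space, and similar bounds for the classical groups), and use the elementary numerical estimates flagged just before the statement --- $p^{2e} > 3e$, $p^{3e} > 6e$ for all $p,e$, and $p^e + 1 > 6e$ once $p^e > 16$ --- to absorb the field-automorphism contribution $f = e$ and, where needed, the factor $d$ (which for $\PSL(n,q)$ is $\gcd(n,q-1) \le n$, etc.). The bulk of the argument is then a finite list of small-$q$, small-rank exceptions where the power of $q$ in $P(S)$ is not yet large enough to dominate $3def$: checking these by hand against the tabulated values of $P(S)$ is what produces the remaining exceptions $\PSL(3,4)$ (where $d = \gcd(3,3) = 3$, $f = 2$, $g = 2$ give $|\Out| = 12$ but $P = 21 < 36$), $\PSL(2,8)$ (where $|\Out| = 3$, $P = 9$), and $A_5 \cong \PSL(2,4) \cong \PSL(2,5)$ (where $|\Out| = 2$, $P = 5$).

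The main obstacle --- really the only delicate point --- is getting the small rank $A_1$ and $A_2$ cases right, i.e. $\PSL(2,q)$ and $\PSL(3,q)$, together with the small-$q$ classical and exceptional groups, since it is precisely there that $P(S)$ is only linear or low-degree in $q$ while $|\Out(S)|$ can pick up all three of the factors $d$, $f$, $g$; one must be careful with $\PSL(2,q)$ where $P(S) = q+1$ for $q \ge 7$ (but $P(S) = q$ for $q \in \{5,7,9,11\}$ and $P(\PSL(2,4)) = 5$), and these are exactly the cases that the hint $p^e + 1 > 6e$ for $p^e > 16$ is designed to handle, leaving only finitely many small $q$ to inspect directly. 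Everything else is routine bookkeeping against the standard references for $P(S)$ and $|\Out(S)|$.
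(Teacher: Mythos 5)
Your proposal follows essentially the same route as the paper: a case-by-case verification over the classification, disposing of alternating and sporadic groups immediately, then running through the Lie type families using the tabulated values of $P(S)$ and $|\Out(S)|$ together with the elementary estimates $p^{2e}>3e$, $p^{3e}>6e$, and $p^e+1>6e$ for $p^e>16$ to absorb the field-automorphism factor, leaving a finite list of small-rank, small-$q$ cases to check by hand. You correctly identify the delicate cases ($\PSL(2,q)$, $\PSL(3,q)$, and the small-$q$ classical and exceptional groups) and recover the same list of exceptions, including $\PSL(3,4)$.
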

\begin{proof}
It is well-known that $P(A_n) = n$ and $|\Out(A_n)|=2$ when $n \ne 6$, and
$|\Out(A_6)|=4$. For $S$ sporadic, we have $|\Out(S)| \le 2$ and $P(S) \ge 11$.
So the result holds in these cases.

For $S$ of Lie type, we refer to \cite[Page xvi]{Atlas} for $|\Out(S)|$,
and \cite[Table 4]{GuestEtAl} (which is derived from earlier results of
Mazurov, Vasilyev and others) for $P(S)$.
Let $q=p^e$ with $p$ prime in the following list of cases.

\begin{enumerate}
\item $S=\PSL(2,q)$ with $q \ge 7$ and $q \ne 9$:
$|\Out(S)| = \gcd(2,q-1)e$ and $P(S) = q+1$ (or $q$ when $q=7$ or $11$).
We check the result directly for $q \le 16$ and note that
$P(S) = p^e+1 > 6e \ge 3|\Out(S)|$ for $q>16$. 

\item $S=\PSL(d,q)$ with $d \ge 3$ and $(d,q) \ne (4,2)$
($\PSL(4,2) \cong A_8$): $|\Out(S)| = 2e\gcd(d,q-1)$ and $P(S)=(q^d-1)/(q-1)$.
For $d \ge 4$ and $(d,q) \ne (4,2)$,
we have $P(S) > q^{d-1} > 6eq > 3|\Out(S)|$ and
the same holds for $d=3$ (i.e. $q>6e$) when $q\not\in\{2,3,4,5,8,9,16\}$.
For those cases we can check that $P(S)=q^2+q+1>3|\Out(S)| = 6e\gcd(d,q-1)$
except for $S=\PSL(3,4)$.

\item $S=\PSU(d,q)$ with $d \ge 3$  and $(d,q) \ne (3,2)$:
$|\Out(S)| = 2e\gcd(d,q+1)$. When $d=3$ (and $q>2$) we have $P(S) = q^3+1>18e
\ge 3|\Out(S)|$ when  $q \ne 5$, and $P(S) = 50 > 18 = 3|\Out(S)|$ when $q=5$.
There are various cases for $P(S)$ when $d \ge 4$, but it is easily checked
that $P(S) > q^4 > 6(q+1)e \ge 3|\Out(S)|$ when $q > 2$, and
$P(S) \ge 27 > 3|\Out(S)|$ when $q=2$.

\item $S = \PSp(d,q)$ with $d \ge 4$ and $(d,q) \ne (4,2)$:
$|\Out(S)| = \gcd(d,q-1)e$. In all cases we have $P(S) \ge q^3>3qe >3|\Out(S)|$.

\item $S = \POm^\epsilon(d,q)$ with $d \ge 7$ and $d(q-1)$ even
(note that $\POm^\circ(d,q) \cong \PSp(d-1,q)$ when $d$ is odd and $q$ is
even): in all cases $|\Out(S)| \le 24 e$ with $|\Out(S)| \le 6$ when $q=2$,
and $P(S) > q^5 > 3|\Out(S)|$.

\item $S$ is an exceptional group of Lie type.
If $G = {^2}B_2(q)$ with $p=2$ and $e \ge 3$ odd, then
$P(S) = q^2+1 > 3e = 3|\Out(S)|$.
If $G = {^2}G_2(q)$ with $p=3$ and $e \ge 3$ odd, then
$P(S) = q^3+1 > 3e = 3|\Out(S)|$.
If $G = G_2(4)$ then $P(S)=416 > 6 = 3|\Out(S)|$.
In all other cases we have $|\Out(S)| \le 6e$, and
$P(S) > q^5 > 18e \ge  3|\Out(S)|$.
\end{enumerate}
\vspace*{-32pt}
\end{proof}

\end{document}